\newtheorem{theorem}{Theorem}[section]
\newtheorem{lemma}[theorem]{Lemma}
\newtheorem{proposition}[theorem]{Proposition}
\newtheorem{corollary}[theorem]{Corollary}
\theoremstyle{definition}
\newtheorem{definition}[theorem]{Definition}
\newtheorem{example}[theorem]{Example}
\theoremstyle{remark}
\numberwithin{equation}{section}
\newcommand{\F}{\mathcal{F}}
\newcommand{\Lin}{\mathcal{L}}
\begin{document}

\setcounter{page}{1}

\setcounter{page}{1}

\title{Approximation spaces for H-operators}

\begin{center}
\author[A. G. AKSOY, D. A. THIONG ]{Asuman G\"{u}ven AKSOY, Daniel Akech Thiong}
\end{center}

\address{$^{*}$Department of Mathematics, Claremont McKenna College, 850 Columbia Avenue, Claremont, CA  91711, USA.}
\email{\textcolor[rgb]{0.00,0.00,0.84}{aaksoy@cmc.edu}}

\address{$^{1}$Department of Mathematics, Claremont Graduate University, 710 N. College Avenue, Claremont, CA  91711, USA..}
\email{\textcolor[rgb]{0.00,0.00,0.84}{daniel.akech@cgu.edu}}

\subjclass[2010]{Primary 47A16, 47B10; Secondary 47A68}

\keywords{interpolation theory, approximation spaces, H-operators, compact operators}


\begin{abstract}
This paper defines and establishes relations among approximation spaces of certain operators called \textit{H-operators}, which generalize the notion of self-adjoint to Banach spaces.

\end{abstract} \maketitle

\section{Introduction} 
In the following, we give a brief review of the background, notation, and terminology that will be relevant to this paper.
Let $X$ and $Y$ be Banach (or quasi-Banach) spaces and $T:X \to Y$ be an operator. $\mathcal{L}(X, Y)$ denotes the normed vector space of all bounded operators from $X$ to $Y$ and $\mathcal{K}(X, Y)$ denote the collection of all compact operators from $X$ to $Y$. 

The problem of creating interpolation spaces is at the core of interpolation theory \cite{BL}.That is, given a pair $(X_{0}, X_{1})$ of Banach (or quasi-Banach) spaces, called a \textit{Banach couple}, with $X_{0}$ and $X_{1}$ both continuously embedded in some Hausdorff topological vector space, how can one construct and describe interpolation spaces $(X_{0}, X_{1})_{\theta, q} $ for the pair $(X_{0}, X_{1})$, where $\theta$ and $q$ are some parameters. Such spaces $(X_{0}, X_{1})_{\theta, q} $ should have the interpolation property that a linear operator $T$, which is bounded on $X_{i}$ for $i =0, 1$ is automatically bounded on $(X_{0}, X_{1})_{\theta, q} $. 

\

A natural question to ask is what properties of $T$ as a linear operator on $X_{i}$ still hold true when $T$ is viewed as a linear operator on $(X_{0}, X_{1})_{\theta, q} $. The answer to this classical question depends on the details of the method used to construct $(X_{0}, X_{1})_{\theta, q}$. Two of the main methods used are real and complex methods, but there are others. In \cite{CP}, it is shown using the real interpolation method that if $T \in \mathcal{K} (X_{0}, Y_{0}) $ and $T \in \mathcal{L} (X_{1}, Y_{1}) $, then $T \in \mathcal{K} ( (X_{0}, X_{1})_{\theta, q}, (Y_{0}, Y_{1})_{\theta, q})$. 

\

In this paper, we construct and describe interpolation spaces when $T$ is a compact H-operator. Under certain conditions regarding Bernstein and Jackson inequalities, interpolation spaces can be realized as approximation spaces, see \cite{Delo}, Theorem 9.1 on page 235. Thus, we are able to define approximation spaces for \textit{compact H-operators} using the sequences of their eigenvalues and establish relations among these spaces using interpolation theory. In section $1$, we start with a motivational example leading to the definition of H-operators. Section 2 discusses approximation spaces. Section 3 briefly presents interpolations spaces and illustrates how approximation spaces can be realized as examples of interpolation spaces under some conditions. Section 4 defines approximation spaces for compact H-operators, contrasts them with the general approximation spaces, and presents an inclusion theorem and a representation theorem. Section 5 points to a connection with Bernstein's Lethargy problem. 

\section{Compact H-Operators}

A fundamental result about linear operators on Hilbert spaces is the spectral theorem, which says that for a compact self-adjoint operator $T$ acting on a separable Hilbert space $H$, one can choose a system of orthonormal eigenvectors $\{v_{n} \}_{n\geq 1}$ of $T$ and corresponding eigenvalues $\{\lambda_{n}\}_{n \geq 1}$ such that \begin{equation} Tx = \sum_{n =1}^{\infty} \lambda_{n} \langle x, v_{n} \rangle v_{n}, \text{ for all } x \in H .\end{equation}  The sequence $\{\lambda_{n} \}$ is decreasing and, if it is infinite, converges to 0. 

\

To investigate the spectral properties of an arbitrary $T \in \mathcal{K}(H)$, where $H$ is a Hilbert space, it is useful to study the eigenvalues of the compact positive self-adjoint operator $T^{*}T$ associated with $T$. If $$\lambda_{1} (T^{*}T) \geq \lambda_{2} (T^{*}T) \geq \cdots > 0$$ denote the positive eigenvalues of $T^{*}T$, where each eigenvalue is repeated as many times as the value of its multiplicity, then the \textit{singular values} of $T$ are defined to be $$s_{n}(T) : = \sqrt{\lambda_{n} (T^{*}T)}, n \geq 1.$$

Using the representation (2.1) for $T^{*}T$ one can prove the following \textit{Schmidt representation} for $T$ \begin{equation} Tx = \sum_{n =1} ^{\infty} s_{n}(T) \langle \psi_{n}, x \rangle \phi_{n}, x \in H, \end{equation} 
where $\{\psi_{n} \}_{n \geq 1}$ and $\{\phi_{n} \}_{n \geq 1}$ are orthonormal systems in $H$ (see \cite{KV} ).

\

It is known that the concept of H-operators is the generalization in a Banach space of the concept of self-adjoint operators. We start with a motivation that will lead to a concrete definition. 

\begin{definition} 
A norm $\| \cdot \|$ on the $n \times n$ matrices is called a \textit{unitarily invariant norm} if $$\|UXV \| = \|X \|$$ for all $X$ and for all unitary matrices $U$ and $V$. 
\end{definition} 


\begin{example}
For $d_{i} \in \mathbb{R}, i = 1, 2, \cdots n$, let's consider $$T = \begin{bmatrix}
    d_{1} & & \\
    & \ddots & \\
    & & d_{n}
  \end{bmatrix}$$

We will examine the operator norm of the resolvent of $T$, $(T - \lambda I)^{-1}$, where $\lambda \in \mathbb{C}$ is in the spectral set of $T$. 
First, note that $$|d_{j} - \lambda |  = |d_{j} - (a +bi)| = \sqrt{ (d_{j} - a)^{2} + b^{2} } \geq \sqrt{b^{2}} = |b| \implies |d_{j} - \lambda |^{-1} \leq | b | ^{-1} = | \text { Im } \lambda |^{-1}.$$

Therefore, we have \begin{equation}  \|(T -\lambda I)^{-1}\| = \max \{|d_{1} - \lambda|^{-1}, \cdots, |d_{n} - \lambda|^{-1} \} \leq |\text{Im}(\lambda)|^{-1}  \end{equation}

If $T$ is self-adjoint, then one can choose an orthonormal basis such that there exists a unitary matrix $P$ and a diagonal matrix such that $T = PDP^{-1}$.

Since the operator norm or any norm defined in terms of singular values is unitarily invariant, we have the norm of $T$ equal to the norm of $D$. So that it suffices to compute the operator norm of a diagonal matrix with real diagonal entries as we have done above. 

Thus if $T$ is a self-adjoint operator, which must have real eigenvalues, between complex Banach spaces, then  \begin{equation}\|(T -\lambda I)^{-1}\| \leq |\text{Im}(\lambda)|^{-1} \end{equation}

which motivates the following definition.
 
 \end{example}
\begin{definition}
Let $T \in \mathcal{L}(X, Y)$ be a linear operator between arbitrary complex Banach spaces $X$ and $Y$. Then $T$ is an \textit{$H$-operator} if and only if its spectrum is real and its resolvent satisfies $$||(T - \lambda I)^{-1}|| \leq C|\text{ Im } \lambda |^{-1},$$ where $\text{ Im } \lambda \neq 0$. 
\end{definition} 
Here $C$ is independent of the points of the resolvent. An operator in Hilbert space is an H-operator with constant $C=1$ if and only if it is a self-adjoint operator.
  In \cite{Mar}, it is proved that closed operators with real eigenvalues are an example of $H$- operators. 
 
If $T$ is a compact $H$-operator, then $\{\lambda_{k}(T) \}$ denotes the sequence of eigenvalues of $T$, and each eigenvalue is repeated according to its multiplicity. We also assume that $\{\lambda_{k}(T) \}$ is ordered by magnitude, so that $| \lambda_{1}(T)| \geq | \lambda_{2}(T)| \geq \cdots$. 


In 1918, F. Riesz proved compact operators have at most countable set of eigenvalues, $\lambda_{n}(T)$, which arranged in a sequence, tend to zero. This result raises the following question:

\

\textit{What are the conditions on  $T\in \Lin (X, Y)$ such that $(\lambda_{n}(T)) \in \ell_{q}$? }

\

Having $(\lambda_{n}(T)) \in \ell_{1}$ is the precise condition one needs in order to generalize the \textit{Schmidt representation} (2.2) given above for compact operators on Banach spaces. 

\

 In this paper, we will be able to answer this question after carefully defining approximation spaces for compact H-operators. 
 
 The question can be recast more specifically, what is the rate of convergence to zero of the sequence $(\lambda_{n}(T))$?

\

Here is an example that shows the importance of the preceding question.

\begin{example}

Consider the diagonal operator $$T = \text{ diag } (a_{1}, a_{2}, a_{3} , \cdots,  )\quad \mbox{ where}\quad  a_{n} = \frac{1}{\log (n + 1)},\quad  n = 1, 2, \cdots. $$

Note that $T$ is compact and its eigenvalues are $\lambda_{n}(T) = a_{n}$. In \cite{KV}, it is shown that for each $q >0$, the number $a_{n}^{q} $ goes to zero slower than $\displaystyle \frac{1}{n}$ when $n \to \infty$. It follows that $\lambda_{n}(T) \notin \ell_{q}$. 
\end{example} 

\

To answer the question on the rate of convergence,  in \cite{Pieeig}, A. Pietsch developed  the theory of $s$-numbers $s_{n}(T)$ (closely related to singular values), which characterize the degree of compactness of $T$. 
There are several possibilities  of  assigning  to every operator $T: X \to Y$ a certain sequence of numbers $\{s_n(T)\}$ such that 
$$ s_1(T) \geq s_2(T) \geq \dots \geq 0.$$ The main examples of s-numbers to be used in this paper are approximation numbers and Kolmogorov numbers.  An operator $T\in \mathcal{L}(X,Y)$ has finite rank if  $\ rank(T) := \dim \{Tx:\,\, x\in X\}$ is finite.  For two arbitrary normed spaces $X$ and $Y$, we define the collection of the finite-rank operators as follows: $$\F(X, Y) = \{A \in \mathcal{L}(X, Y): \text{rank} (A) \leq n -1 \},$$ which forms the smallest ideal of operators.  As usual $\mathcal{K}(X, Y)$ is the collection of compact operators.


\begin{definition} We give the definition of the following s-numbers:
\begin{enumerate}
\item The \textit{nth approximation number} $$\alpha_{n}(T) = \inf\{||T - A||: A \in \F(X, Y)\},\quad n=0,1,\dots$$
Note that $\alpha_{n}(T)$ provides a measure of how well T can be approximated by finite mappings whose range is at most n-dimensional.  
The largest $s$-number is the approximation number. 


\item The \textit{nth  Kolmogorov diameter} of $T \in \mathcal{L}(X)$ is defined by $$\delta_{n}(T) = \inf \{||Q_{G} T||: \dim G \leq n \}$$ where the infimum is over all subspaces $G \subset X$ and $Q_{G}$ denotes the canonical quotient map $Q_{G}: X \rightarrow X/G$. 
\end{enumerate}
\end{definition} 
It is clear that $\alpha_n(T)$ and $\delta_{n}(T) $ are  monotone decreasing sequences and that 
$$ \lim_{n\to \infty} \alpha_n(T)=0 \quad\mbox{if and only if }\quad T\in \mathcal{F}(X,Y)$$ and 
$$ \lim_{n\to \infty} \delta_n(T)=0 \quad\mbox{if and only if }\quad T\in \mathcal{K}(X,Y).$$

In \cite{GKL}, it is shown that for any compact operator $T$ on a Hilbert space $H$ the n-th singular value $s_{n}(T)$ coincides with the n-th approximation number $\alpha_{n}(T)$. This allows us to compute $\alpha_{n}(T)$.

\begin{example}

Consider the non-self-adjoint $T = \begin{bmatrix} 2 & 1 & 0 \\ 0 & 2 & 0\\1 & 1 & 1 \end{bmatrix} $

The characteristic polynomial of $T$ is $\Delta (\lambda) = \lambda^{3}  - 5\lambda^{2} + 8 \lambda -4$ and the eigenvalues of $T$ are $\lambda_{1} (T) = 2$, $\lambda_{2} (T) = 2$ and $\lambda_{3} (T) = 1$. 

We have $T^{*} T =  \begin{bmatrix} 5 & 3 & 1 \\ 3 & 6 & 1\\1 & 1 & 1 \end{bmatrix} $.

The characteristic polynomial of $T^{*}T$ is $\Delta(\lambda) = \lambda^{3}  - 12\lambda^{2} + 30 \lambda -16$ and the eigenvalues are approximately $\lambda_{1} (T^{*}T) = 8.796$, $\lambda_{2} (T^{*}T) = 2.466$, and $\lambda_{3} (T^{*}T) = 0.738$

\end{example}

Note that we have $\alpha_{1}(T) = s_{1}(T) > |\lambda_{1}(T)|$, $\alpha_{2}(T) = s_{2}(T) < |\lambda_{2}(T)|$, and $\alpha_{3}(T) = s_{3}(T) < |\lambda_{3}(T)|$, which means we cannot compare $\alpha_{n}(T)$ with $|\lambda_{n}(T)|$.

Note also that in the preceding example, $T \neq T^{*}$ and $T^{*}T \neq TT^{*}$, that is, $T$ is neither self-adjoint nor normal. We should give an example that is not self-adjoint, but normal and see how it compares.

\begin{example}
Consider $T=\begin{bmatrix}2&\!-3\\3&\;2\end{bmatrix} $.
Then $T^*=T^t=\begin{bmatrix}\;2&3\\\!-3&2\end{bmatrix}$. 

We have $T^{*}T  =\begin{bmatrix}13&\!0\\0&\;13\end{bmatrix}  = TT^{*}$.

The eigenvalues of $T$ are $\lambda_{1} = 2 + 3i$ and $\lambda_{2} = 2 - 3i$ where it follows that $|\lambda_{1}| = \sqrt{13} = |\lambda_{2} |$ . The singular values of $T$ are $s_{1}(T) = \sqrt{13} = s_{2} (T)$. 

In this case, we have $\alpha_{n}(T) = s_{n}(T) = |\lambda_{n}(T)|$, for $n \in \{1, 2\}$. 
\end{example}

For compact self-adjoint operators on Hilbert spaces and more broadly for compact $H$- operators we will be able to always compare the preceding approximation quantities. 

Indeed, the importance of $H$-operators comes from a result of Markus \cite{Mar}, which shows that for a compact $H$-operator $T$ with eigenvalues $(\lambda_{n})$ (numbered in order of decreasing modulus and taking into account their multiplicity), the sequence $(| \lambda_{n} (T)|)$ is equivalent to \textit{approximation numbers} and \textit{Kolmogorov diameters}. Specifically, in \cite{Mar}  Markus proved the following theorem. 
\begin{theorem}
If $T$ is a compact H-operator on a Banach space $X$, then
\begin{equation} \delta_{n-1} (T) \leq \alpha_{n}(T) \leq 2 \sqrt{2} C |\lambda_{n}(T)| \leq 8C(C + 1) \delta_{n-1}(T), \end{equation} where $C$ is a constant from the definition of $H$-operator,  
and $\delta_n(T)$ and $\alpha_n(T)$ are the n-th Kolmogorov diameter and n-th approximation numbers of $T$ respectively. 

\end{theorem}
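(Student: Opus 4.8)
\medskip

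The plan is to derive the outermost inequality from a general fact about $s$-numbers and to reduce both ``eigenvalue'' inequalities to a single quantitative estimate, which is where all the work (and the constants) will sit. For $\delta_{n-1}(T)\le\alpha_n(T)$: if $A\in\mathcal L(X)$ has $\operatorname{rank}(A)\le n-1$, put $G=\overline{A(X)}$, so $\dim G\le n-1$ and $Q_GA=0$, whence $\|Q_GT\|=\|Q_G(T-A)\|\le\|T-A\|$; taking the infimum over such $A$ gives the bound.

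The estimate I would isolate as a lemma is: \emph{a compact $H$-operator $S$ on a Banach space, with constant $C$, satisfies $\|S\|\le 2\sqrt2\,C\,|\lambda_1(S)|$} --- that is, for an $H$-operator the norm is controlled by the spectral radius. I would prove this from the Riesz--Dunford representation $S=\tfrac1{2\pi i}\oint_\Gamma\lambda(\lambda I-S)^{-1}\,d\lambda$, with $\Gamma$ a contour encircling the spectral interval $[-r,r]$, $r=|\lambda_1(S)|$ (recall $\sigma(S)\subset\mathbb R$ and $S$ is compact), whose sides are held at a fixed height off the real axis where the bound $\|(\lambda I-S)^{-1}\|\le C|\operatorname{Im}\lambda|^{-1}$ is available; optimising that height relative to $r$ should yield the factor $2\sqrt2\,C$. (A convenient reformulation is the identity $(ihI-S)^{-1}(-ihI-S)^{-1}=(S^2+h^2I)^{-1}$, which with the resolvent bound gives $\|(S^2+h^2I)^{-1}\|\le C/h^2$ for all $h>0$, i.e.\ $S^2$ is sectorial of angle $0$ with constant $C$.)

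Granting the lemma, here is the rest. The resolvent bound forces the resolvent of a compact $H$-operator to have only simple poles (a nontrivial Jordan chain at $\lambda_0$ would make $\|(\lambda I-T)^{-1}\|$ grow like $|\lambda-\lambda_0|^{-2}$ along $\operatorname{Re}\lambda=\lambda_0$), so for each $m$ there is a bounded rank-$m$ projection $P_m$ commuting with $T$ and with range $\operatorname{span}\{v_1,\dots,v_m\}$, where $v_1,v_2,\dots$ are eigenvectors for $\lambda_1(T),\lambda_2(T),\dots$; moreover $T(I-P_m)=(I-P_m)T$ is the restriction of $T$ to the invariant subspace $(I-P_m)X$, again a compact $H$-operator with the \emph{same} constant $C$ (restricting to an invariant subspace never increases a resolvent norm) and with leading eigenvalue $\lambda_{m+1}(T)$. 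Taking $A=TP_{n-1}$, of rank $\le n-1$, the lemma gives $\alpha_n(T)\le\|T(I-P_{n-1})\|\le 2\sqrt2\,C\,|\lambda_n(T)|$. For the last inequality fix any $G$ with $\dim G\le n-1$, put $M=\operatorname{span}\{v_1,\dots,v_n\}$, so $\dim M=n>\dim G$ and $T|_M$ is invertible on $M$ with $\sigma(T|_M)\subset\{\lambda:|\lambda|\ge|\lambda_n(T)|\}\subset\mathbb R$; the same contour argument applied to $(T|_M)^{-1}=\tfrac1{2\pi i}\oint_\Gamma\lambda^{-1}(\lambda I-T|_M)^{-1}\,d\lambda$ gives, with an extra $+1$ in the constant, $\|(T|_M)^{-1}\|\le 2\sqrt2\,(C+1)\,|\lambda_n(T)|^{-1}$. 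Writing $x\in M$ as $(T|_M)^{-1}z$,
\[
\|Q_GT\|\ \ge\ \sup_{x\in M,\ \|x\|\le1}\operatorname{dist}(Tx,G)\ \ge\ \frac{1}{\|(T|_M)^{-1}\|}\ \sup_{z\in M,\ \|z\|\le1}\operatorname{dist}(z,G)\ \ge\ \frac{1}{\|(T|_M)^{-1}\|},
\]
the last inequality being the standard fact (a Borsuk--Ulam argument) that the Kolmogorov width of the unit ball of an $n$-dimensional space with respect to subspaces of dimension $<n$ is at least $1$. Hence $\|Q_GT\|\ge|\lambda_n(T)|/(2\sqrt2(C+1))$ for every such $G$, and taking the infimum over $G$ gives $2\sqrt2\,C\,|\lambda_n(T)|\le 8C(C+1)\,\delta_{n-1}(T)$.

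The main obstacle is the lemma: pinning down the numerical constant $2\sqrt2$ and the dependence on $C$ requires the right contour, the delicate point being a bound for the resolvent on the parts of the contour that lie near the real axis but outside $\sigma(S)$, together with the verification that restrictions and finite-dimensional compressions of $H$-operators retain the constant $C$ (with at most the $+1$ loss in the reciprocal functional calculus). The remaining ingredients --- the Riesz projections, the Borsuk--Ulam width estimate, and the Hahn--Banach bookkeeping behind the displayed chain --- are routine.
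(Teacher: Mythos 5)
The paper does not actually prove this statement: it is quoted verbatim from Markus \cite{Mar}, so there is no in-paper argument to compare yours against. Judged on its own, your skeleton is the right one --- first inequality from $Q_GA=0$, middle inequality by applying a ``norm $\le\,$const$\,\times$ spectral radius'' lemma to $T$ with the first $n-1$ eigenvalues projected away, last inequality from an inverse bound on the $n$-dimensional spectral subspace plus Tikhomirov's width theorem --- and the fact that your constants multiply out correctly, $8C(C+1)=\bigl(2\sqrt2\,C\bigr)\bigl(2\sqrt2\,(C+1)\bigr)$, shows you have reverse-engineered the architecture of Markus's proof. But the proposal has two genuine gaps, both located exactly where you admit ``all the work sits.''

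First, the central lemma $\|S\|\le 2\sqrt2\,C\,|\lambda_1(S)|$ is not proved, and the route you sketch cannot work as stated: any closed contour around $[-r,r]$ must cross the real axis near $\pm r$, and on those closing arcs the only estimate an $H$-operator supplies is $\|(\lambda I-S)^{-1}\|\le C|\operatorname{Im}\lambda|^{-1}$, which is non-integrable against arc length ($\int_0^b t^{-1}\,dt=\infty$); ``optimising the height of the horizontal sides'' does not remove the divergence of the vertical sides. One needs a genuinely different device (Markus in effect exploits identities such as $(S-ih)^{-1}(S+ih)^{-1}=(S^2+h^2)^{-1}$ --- note this yields $C^2/h^2$, not $C/h^2$ as you wrote --- rather than a naive Riesz--Dunford integral), and this is the entire content of the theorem, not a routine calculation. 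Second, even granting the lemma, your step $\alpha_n(T)\le\|T(I-P_{n-1})\|\le 2\sqrt2\,C|\lambda_n(T)|$ conflates the norm of $T(I-P_{n-1})$ on $X$ with the norm of the restriction $T|_{(I-P_{n-1})X}$ on the invariant subspace: passing between them costs a factor $\|I-P_{n-1}\|$, and the Riesz projections of a compact $H$-operator are \emph{not} uniformly bounded in general (that is essentially the basis problem for its eigenvectors). Likewise $T(I-P_{n-1})$ viewed on all of $X$ is an $H$-operator only with constant of order $C(\|P_{n-1}\|+\|I-P_{n-1}\|)$, not $C$. So the middle inequality needs a direct estimate of $T(I-P_{n-1})$ in terms of the resolvent of $T$ itself, and the same uniformity issue recurs in your bound $\|(T|_M)^{-1}\|\le 2\sqrt2\,(C+1)|\lambda_n(T)|^{-1}$ for the last inequality. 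The first inequality $\delta_{n-1}(T)\le\alpha_n(T)$ and the width argument via Borsuk--Ulam are fine.
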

The following corollary follows from the preceding theorem.
\begin{corollary} 
If $T$ is a compact H-operator on a Banach space $X$, then for any $0 < \mu \leq \infty$, $|\lambda_{n}(T)|  \in \ell_{\mu}  \iff \delta_n(T)  \in \ell_{\mu} \iff \alpha_n(T) \in \ell_{\mu}$. 

\end{corollary}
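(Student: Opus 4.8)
The plan is to deduce the corollary directly from the chain of inequalities in Theorem 2.6, using the elementary fact that membership in $\ell_\mu$ is preserved under passing to a sequence that is dominated (up to a multiplicative constant) by an $\ell_\mu$ sequence, and that this domination runs in both directions here. Concretely, Theorem 2.6 gives, for every $n$,
\[
\delta_{n-1}(T) \leq \alpha_n(T) \leq 2\sqrt{2}\,C\,|\lambda_n(T)| \leq 8C(C+1)\,\delta_{n-1}(T),
\]
so the three sequences $(\delta_{n-1}(T))_n$, $(\alpha_n(T))_n$ and $(|\lambda_n(T)|)_n$ are pairwise equivalent in the sense that each is bounded above by a fixed constant times each of the others. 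Equivalent sequences of nonnegative reals lie in $\ell_\mu$ simultaneously, for any $0<\mu\le\infty$, since $\sum_n (K a_n)^\mu = K^\mu \sum_n a_n^\mu$ when $\mu<\infty$ and $\sup_n K a_n = K \sup_n a_n$ when $\mu=\infty$.

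First I would record the two-sided comparison explicitly: from the displayed inequalities,
\[
\delta_{n-1}(T) \leq \alpha_n(T) \leq 8C(C+1)\,\delta_{n-1}(T)
\quad\text{and}\quad
\frac{1}{2\sqrt{2}\,C}\,\alpha_n(T) \leq |\lambda_n(T)| \leq \frac{4(C+1)}{\sqrt 2}\,\delta_{n-1}(T) \leq \frac{4(C+1)}{\sqrt 2}\,\alpha_n(T),
\]
where the last step uses $\delta_{n-1}(T)\le\alpha_n(T)$ again. Thus $\alpha_n(T)$ and $|\lambda_n(T)|$ differ by at most fixed multiplicative constants in both directions, and likewise $\alpha_n(T)$ and $\delta_{n-1}(T)$. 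Then I would note that the shift in index from $\delta_{n-1}$ to $\delta_n$ is harmless: $(\delta_n(T))_{n\ge 1}\in\ell_\mu$ iff $(\delta_{n-1}(T))_{n\ge 1}\in\ell_\mu$, since the two sequences differ only by the single extra term $\delta_0(T)=\|T\|<\infty$, which affects neither finiteness of the $\mu$-sum nor the supremum.

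With these observations the proof is a one-line implication in each direction: if $|\lambda_n(T)|\in\ell_\mu$ then $\alpha_n(T)\le 2\sqrt2\,C\,|\lambda_n(T)|$ forces $\alpha_n(T)\in\ell_\mu$, and $\delta_{n-1}(T)\le\alpha_n(T)$ forces $\delta_n(T)\in\ell_\mu$; conversely if $\delta_n(T)\in\ell_\mu$ then $\alpha_n(T)\le 8C(C+1)\delta_{n-1}(T)$ gives $\alpha_n(T)\in\ell_\mu$, and $|\lambda_n(T)|\le\frac{1}{2\sqrt2\,C}\alpha_n(T)$ gives $|\lambda_n(T)|\in\ell_\mu$; and the remaining equivalence with $\alpha_n(T)\in\ell_\mu$ has already been absorbed into these steps. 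There is essentially no obstacle here — the content is entirely in Theorem 2.6, which we are permitted to assume — so the only thing to be careful about is stating the constants and the harmless index shift cleanly, and handling the endpoint case $\mu=\infty$ alongside $\mu<\infty$ in the same breath.
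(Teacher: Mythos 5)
Your proof is correct and is exactly the argument the paper intends: the corollary is stated as an immediate consequence of Theorem 2.6 (the paper gives no separate proof), and your two-sided constant comparisons plus the harmless index shift fill in precisely the routine details. Nothing further is needed.
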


This equivalence allows us to construct approximation spaces for $H$- operators using sequences of eigenvalues, but before we do this we will first introduce approximation spaces. 

\section{Approximation spaces} 

\begin{definition} 
A \textit{quasi-norm} is a non-negative function $||.||_{X}$ defined on a real or complex linear space $X$ for which the following conditions are satisfied:
\begin{enumerate}
\item [(1)] If $||f||_{X} = 0$ for some $f \in X$, then $f =0$.
\item[(2)] $|| \lambda f||_{X} = | \lambda | ||f||_{X}$ for $f \in X$ and all scalars $\lambda$.
\item [(3)] There exists a constant $c_{X} \geq 1$ such that $$||f + g||_{X} \leq c_{X} [||f||_{X} + ||g||_{X}]$$ for $f, g \in X$. 
\end{enumerate} 
\end{definition} 

A \textit{quasi-Banach space} is any linear space $X$ equipped with a quasi-norm $||.||_{X}$ such that every Cauchy sequence is convergent.

\begin{definition} \label{ap_sch} An \textit{approximation scheme} $(X, A_{n})$ is a quasi-Banach space $X$ together with a sequence of subsets $A_{n}$ satisfying the following:
\begin{itemize}
\item[$(A1)$] there exists a map $K:\mathbb{N}\to\mathbb{N}$ such that $K(n)\geq n$ and $A_n+A_n\subseteq A_{K(n)}$ for all $n\in\mathbb{N}$,

\item[$(A2)$] $\lambda A_n\subset A_n$ for all $n\in\mathbb{N}$ and all scalars $\lambda$,

\item[$(A3)$] $\bigcup_{n\in\mathbb{N}}A_n$ is a dense subset of $X$.
\end{itemize}

\end{definition}  
Approximation schemes were introduced in Banach space theory by Butzer and Scherer in $1968$ \cite{But} and independently by Y. Brudnyi and N. Kruglyak under the name of ``approximation families''  in \cite{BK}. They were
popularized by Pietsch in his 1981 paper \cite{Pi}, for  later developments we refer the reader to  \cite{  Ak-Al, AA,  AL}.

Let $(X, A_{n})$ be an approximation scheme. For $f \in X$ and $n =1, 2, \cdots$, the \textit{nth approximation number} is defined by $$\alpha_{n} (f, X) : = \inf \{||f - a|||_{X}: a \in A_{n -1}\}.$$  $\alpha_{n}(f, X)$ is the error of best approximation to $f$ by the elements of $A_{n-1}$. 

\begin{definition} \label{ap_sp}
Let $ 0 < \rho < \infty$ and $ 0 < \mu \leq \infty$. Then the \textit{approximation space}  $X_{\mu}^{\rho}$, or more precisely $(X, A_{n})_{\mu}^{\rho}$ consists of all elements $f \in X$ such that $$(n^{\rho - \mu^{-1} } \alpha_{n}(f, X) )\in \ell_{\mu},$$
where $n = 1, 2, \cdots$. We put $||f||_{X_{\mu}^{\rho}} = ||n^{\rho - \mu^{-1} } \alpha_{n}(f, X) )||_{\ell_{\mu}}$ for $f \in X_{\mu}^{\rho}$. 
\end{definition} 

Now, we define and present Lorentz sequences as examples of approximation spaces. 

\begin{definition}
A null sequence $x = (\zeta_{k}) $ is said to belong to the \textit{Lorentz sequence space} $\ell_{p,q}$ if the non-increasing re-arrangement $(s_{k}(x))$ of its absolute values $|\zeta_{k}|$ satisfies

\begin{equation}
\left(k^{\frac{1}{p} - \frac{1}{q}} s_{k}(x) \right) \in \ell_{q}, 
\end{equation} 
so that 

\begin{equation}
\lambda_{p, q}(x) = \begin{cases} \left( \displaystyle\sum_{k=1}^{\infty} \left( k^{\frac{1}{p} - \frac{1}{q}} s_{k}(x) \right)^{q} \right)^{\frac{1}{q}} \quad \text{ for } 0 < p < \infty \text { and } \quad 0 < q < \infty \\
\sup_{1 \leq k < \infty} k^{\frac{1}{p}} s_{k}(x) \quad \quad\quad \text{ for } 0 < p < \infty \text{ and } \quad q = \infty
\end{cases} 
\end{equation} 
is finite. 
\end{definition}

\begin{example}

Let $ 0 < \rho < \infty$ and $ 0 < \mu \leq \infty$. Consider the approximation scheme  $(X, A_{n})$, where $X = \ell_{\infty}$ and $A_{n}$:= the subset of sequences having at most $n$ coordinates different from $0$. 
For any $\eta \in \ell_{\infty}$, the sequence $\alpha_{n}(\eta; \ell_{\infty} )$ is the non-increasing rearrangement of the sequences $\eta$ and $X_{\mu}^{\rho} = \ell_{\rho^{-1}, \mu}$ (see\cite{Pi}, page 123). 

\end{example}

 \section{Interpolation spaces} 

\begin{definition}

An \textit{intermediate space} between $X_{0}$ and $X_{1}$ is any normed space $X$ such that $X_{0} \cap X_{1}  \subset X \subset X_{0} + X_{1}$ (with continuous embedding).
\end{definition} 

\begin{definition} 
An \textit{interpolation space} between $X_{0} $ and $X_{1}$ is any intermediate space $X$ such that every linear mapping from $X_{0} + X_{1}$ into itself which is continuous from $X_{0}$ into itself and from $X_{1}$ into itself is automatically continuous from $X$ into itself. An interpolation space is said to be of exponent $\theta$ $(0 < \theta < 1)$ , if there exists a constant $C$ such that one has $$||A||_{L(X)} \leq C ||A||_{L(X_{0})} ^{1- \theta} ||A||_{L(X_{1})}^{\theta} \text{ for all  } A \in L(X_{0}) \cap L(X_{1}).$$

\end{definition}

\begin{definition} 

Let $X_{i}, i =0, 1$ be two normed spaces, continuously embedded into a topological vector space $V$ so that $X_{0} \cap X_{1} $ and $X_{0} + X_{1}$ are defined with  $X_{0} \cap X_{1}$ equipped with the norm $$||f|| _{X_{0} \cap X_{1}} = \max \{ ||f||_{X_{0}}, ||f||_{X_{1}} \} $$ and $X_{0} + X_{1} $ is equipped with the norm $$||f||_{X_{0} + X_{1}} = \inf_{f = f_{0} + f_{1}} (||f_{0}||_{X_{0}} + ||f_{1}||_{X_{1}} ). $$
\end{definition} 

\begin{definition} 
For $f \in X_{0} + X_{1}$ and $t > 0$ one defines \[K(f, t) = \inf_{f = f_{0} + f_{1}} (||f_{0}||_{X_{0}} + t||f_{1}||_{X_{1}} ),\] and for $0 < \theta < 1$ and $1 \leq p \leq \infty$ (or for $\theta = 0, 1$ with $p = \infty$), one defines the \textit{real interpolation space} as follows: \[(X_{0}, X_{1})_{\theta, p} := \left\{f \in X_{0} + X_{1} : \quad t^{-\theta} K(f, t) \in L_{p} \left([0, \infty), \frac{dt}{t}\right) \right \} \] with the norm 
$||f||_{(X_{0}, X_{1} )_{\theta, p}}: =  \begin{cases} \left( \int_{0}^{\infty} \left(t^{-\theta} K(f, t) \right)^{p} \frac{dt}{t} \right)^{\frac{1}{p}}, \quad  0 < p < \infty \\ \sup_{0 \leq t < \infty} t^{-\theta} K(f, t), \quad\quad   p = \infty \end{cases}$
\end{definition}

 $K(f,t)$ is continuous and monotone decreasing in $t$, with $K(f,t)  \to 0$ as $ t\to 0+$. K-functional provides a relationship between interpolation and approximation spaces. 
 
 Once again, the Lorentz sequences are examples of interpolation spaces as can be seen from this classical example. 
 
 \begin{example} 
The Lorentz sequence space $l_{p, q}$ can be created as an approximation space using a real interpolation space. Take $X = \ell_{r}$ and $Y = \ell_{s}$. Then $(\ell_{r}, \ell_{s}) _{\theta, q} = \ell_{p, q}$ for $\frac{1}{p}: = \frac{(1 - \theta)}{r} + \frac{\theta}{s} $ and $ 0 < q \leq \infty$. 
\end{example} 

Now, if we take $p = q$, then $(\ell_{r}, \ell_{s}) _{\theta, p} = \ell_{p, p} = \ell_{p}$ for $\frac{1}{p}: = \frac{(1 - \theta)}{r} + \frac{\theta}{s} $ and $ 0 < p \leq \infty$, so that every $\ell_{p}$ may be realized as an interpolation space.


\

\

The real interpolation method provides the connection between interpolation spaces and approximation spaces. To state this connection, we need the following fundamental inequalities \cite{BS}.

Jackson's inequality, which measures the rate of decrease of $\alpha_{n}(f; X)$ is given by \begin{equation} \alpha_{n} (f, X) \leq C(n + 1)^{-\sigma} \alpha_{n}(f; Y), \text{ where } C  \text { is a constant. } \end{equation}

Berstein's inequality, which measures the rate of increase of $\|p_{n}\|_{X}$, where $p_{n} \in A_{n}$, is given by \begin{equation} \| p_{n} \|_{X} \geq C(n + 1)^{-\sigma} \| p_{n} \|_{Y}, \text{ where } C  \text { is a constant. }\end{equation}

If Jackson's and Bernstein's inequalities are valid for the pair $X$ and $Y$, then we can characterize completely the approximation spaces $X_{\mu}^{\rho}$ using the real interpolation spaces $(X, Y)_{\theta, q}$. 

\begin{proposition} [\cite{Delo}, Theorem 9.1] 

If both the Jackson and the Bernstein inequalities hold for the spaces $X$ and $Y$, then for $0 < \rho < r$ and $0 < \mu \leq \infty$ we have 
\begin{equation} X_{\mu}^{\rho} = (X, Y)_{\frac{\rho}{r}, \mu} . \end{equation} 

\end{proposition}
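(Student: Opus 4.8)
The plan is to derive the identity from a two-sided comparison between the $K$-functional of the couple $(X,Y)$ and the best-approximation errors, followed by a discretization of the integral defining $(X,Y)_{\rho/r,\mu}$. Write $e_n(f):=\alpha_{n+1}(f,X)=\inf\{\|f-a\|_X:a\in A_n\}$, let $r$ denote the common exponent $\sigma$ of the Jackson and Bernstein inequalities, and put $\theta:=\rho/r\in(0,1)$, so that $\theta r=\rho$. First I would establish the lower bound
\[
e_n(f)\ \le\ C\,K\!\left(f,(n+1)^{-r};X,Y\right),\qquad n=0,1,\dots
\]
Indeed, given any decomposition $f=f_0+f_1$ with $f_1\in Y$, choose $a\in A_n$ with $\|f_1-a\|_X\le 2\,\alpha_{n+1}(f_1,X)$; Jackson's inequality bounds $\alpha_{n+1}(f_1,X)$ by $C(n+1)^{-r}\|f_1\|_Y$, whence $\|f-a\|_X\le\|f_0\|_X+C(n+1)^{-r}\|f_1\|_Y$, and taking the infimum over all such decompositions gives the claim.

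For the reverse estimate I would pick near-best approximants $g_n\in A_n$ with $\|f-g_n\|_X\le 2e_n(f)$ and telescope along the dyadic scale. Since $\|g_{2^{k+1}}-g_{2^k}\|_X\le C\,e_{2^k}(f)$ and, using the scheme axioms $(A1)$ and $(A2)$, the difference lies in some $A_N$ with $N$ comparable to $2^k$, Bernstein's inequality gives $\|g_{2^{k+1}}-g_{2^k}\|_Y\le C\,2^{kr}e_{2^k}(f)$; summing and using $\|f-g_{2^m}\|_X\to 0$ (density of $\bigcup A_n$) yields, for $t=2^{-mr}$,
\[
K\!\left(f,2^{-mr};X,Y\right)\ \le\ C\,e_{2^m}(f)+C\,2^{-mr}\sum_{k=0}^{m}2^{kr}e_{2^k}(f).
\]

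Next I would discretize. Substituting $t=2^{-mr}$ in $\int_0^\infty\big(t^{-\theta}K(f,t)\big)^\mu\,\tfrac{dt}{t}$ and using monotonicity of $t\mapsto K(f,t)$ (the range $t\ge 1$ contributes at most a constant multiple of $\|f\|_X^\mu$ because $\theta>0$), membership $f\in(X,Y)_{\theta,\mu}$ becomes equivalent to $\big(2^{m\rho}K(f,2^{-mr})\big)_{m\ge0}\in\ell_\mu$; similarly, by monotonicity of $(e_n(f))$ and $\rho>0$, $f\in X_\mu^\rho$ is equivalent to $\big(2^{m\rho}e_{2^m}(f)\big)_{m\ge0}\in\ell_\mu$. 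Writing $c_m:=2^{m\rho}e_{2^m}(f)$ and $b_m:=2^{m\rho}K(f,2^{-mr})$, the lower bound gives $c_m\le Cb_m$, while the upper bound rewrites as
\[
b_m\ \le\ C\sum_{k=0}^{m}2^{-(m-k)(r-\rho)}c_k,
\]
a convolution of $(c_k)$ against the kernel $\kappa_j=2^{-j(r-\rho)}$, $j\ge0$. Here the hypothesis $\rho<r$ is used decisively: it forces $\sum_{j\ge0}\kappa_j^{\min(1,\mu)}<\infty$, so Young's convolution inequality on $\ell_\mu$ (in its quasi-norm form when $\mu<1$) yields $\|b\|_{\ell_\mu}\le C\|c\|_{\ell_\mu}$. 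Combining with $\|c\|_{\ell_\mu}\le C\|b\|_{\ell_\mu}$ gives $\|b\|_{\ell_\mu}\asymp\|c\|_{\ell_\mu}$, hence $X_\mu^\rho=(X,Y)_{\theta,\mu}$ with equivalent (quasi-)norms; the $\mu=\infty$ endpoint is handled identically with suprema.

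The main obstacle is the upper bound for the $K$-functional: a single near-best approximant $g_n$ only gives $\|g_n\|_Y\le Cn^r\|f\|_X$, which is useless, so one genuinely needs the dyadic telescoping, and it must be made compatible with the weak axioms of an abstract approximation scheme — one uses $0\in A_n$, the near-monotonicity $A_n\subseteq A_{K(n)}$, and a mild linear growth bound $K(n)\le cn$ (automatic in the nested-subspace case and satisfied in the examples here) so that the exponent $r$ in Bernstein's inequality propagates cleanly through the sum. The remaining points — comparing dyadic sums with full sums and with the integral, and the $\mu<1$ quasi-Banach bookkeeping — are routine once monotonicity of $e_n(f)$ and convergence of the geometric series $\sum_{j\ge0}2^{-j(r-\rho)}$ are in hand.
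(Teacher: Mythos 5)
The paper does not actually prove this proposition --- it is quoted from DeVore and Lorentz (\cite{Delo}, Theorem 9.1) --- and your argument is a correct reconstruction of the standard proof given there: Jackson yields $e_n(f)\le C\,K(f,(n+1)^{-r})$, Bernstein plus dyadic telescoping of near-best approximants yields the reverse estimate, and the discrete Hardy/Young convolution inequality (where $\rho<r$ enters) converts the two bounds into the equivalence of quasi-norms. The one point worth flagging is that the paper's Definition~3.2 of an approximation scheme does not include the monotonicity $A_n\subseteq A_{n+1}$ or any growth control on the map $K(n)$, both of which your telescoping step quietly uses; you correctly identify the need for $K(n)\le cn$, and these hypotheses are part of the standing assumptions in \cite{Delo} and hold in every example the paper considers.
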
 

According to the preceding proposition, to identify for a given $X$ and approximation scheme $Q$, the approximation spaces $X_{\mu}^{\rho}$, $0 < \mu < r$, it is enough to find a space $Y$ for which the Jackson and Bernstein inequalities are valid. 

There is a way to find such spaces $Y$:

\begin{proposition} [\cite{Delo}, Theorem 9.3] 
Consider an approximation scheme $(X, A_{n})$. Then for $0 < \mu \leq \infty$, $0 < \rho < \infty $, the space $Y : = Y_{\rho} : = X_{\mu}^{\rho}$ satisfies the Jackson and the Bernstein inequalities. Moreover, for $0 < \alpha < r$ and $0 < \mu_{1} \leq \infty$, 
\begin{equation}
(X, Y)_{\frac{\alpha}{r}, \mu_{1}} = X_{\mu_{1}}^{\alpha}  .
\end{equation} 
\end{proposition}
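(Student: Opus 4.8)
\emph{Strategy.} The plan is to verify directly that the couple $(X, Y)$, with $Y := X_\mu^\rho$, satisfies the Jackson and Bernstein inequalities for the exponent $r := \rho$, and then to invoke the preceding proposition (\cite{Delo}, Theorem~9.1): applied to this couple it gives at once, for all $0 < \alpha < r = \rho$ and $0 < \mu_1 \le \infty$, the identity $X_{\mu_1}^{\alpha} = (X, Y)_{\alpha/r, \mu_1}$, which is the ``moreover'' part of the statement. So everything reduces to the two inequalities, and each of them is obtained by elementary bookkeeping with the $\ell_\mu$-quasinorm defining $X_\mu^\rho$, together with the monotonicity of the approximation numbers $\big(\alpha_m(\cdot, X)\big)_m$ and the elementary estimate $\sum_{m=1}^{n} m^{\theta-1}\asymp n^{\theta}$ (valid for $\theta := \rho\mu > 0$; the case $\mu = \infty$ uses the obvious $\sup$ variants). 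First one records that $Y = X_\mu^\rho$ is a quasi-Banach space containing every $A_n$: if $p \in A_n$ then $\alpha_{n+1}(p, X) = 0$, so by monotonicity $\alpha_m(p, X) = 0$ for all $m \ge n+1$, whence $\|p\|_Y < \infty$.

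\emph{Bernstein.} Let $p \in A_n$. Using $\alpha_m(p, X) = 0$ for $m \ge n+1$ and $\alpha_m(p, X) \le \|p\|_X$ for $m \le n$,
\[
\|p\|_Y^{\mu} \;=\; \sum_{m=1}^{n}\left(m^{\rho-1/\mu}\alpha_m(p, X)\right)^{\mu} \;\le\; \|p\|_X^{\mu}\sum_{m=1}^{n} m^{\rho\mu-1} \;\le\; C\, n^{\rho\mu}\|p\|_X^{\mu},
\]
that is $\|p\|_Y \le C(n+1)^{\rho}\|p\|_X$, the Bernstein inequality with exponent $\rho$.

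\emph{Jackson.} Let $f \in Y = X_\mu^\rho$. By monotonicity of $\big(\alpha_m(f, X)\big)_m$,
\[
\|f\|_Y^{\mu} \;\ge\; \sum_{m=1}^{n}\left(m^{\rho-1/\mu}\alpha_m(f, X)\right)^{\mu} \;\ge\; \alpha_n(f, X)^{\mu}\sum_{m=1}^{n} m^{\rho\mu-1} \;\ge\; c\, n^{\rho\mu}\alpha_n(f, X)^{\mu},
\]
so $\alpha_n(f, X) \le C\, n^{-\rho}\|f\|_Y$, which is the Jackson inequality with exponent $\rho$. (If the sharper form with $\alpha_n(f; Y)$ on the right is wanted, apply this to $f - a$ for $a \in A_{n-1}$ near-optimal for $\alpha_n(f; Y)$ and use $a + b \in A_{K(n-1)}$ when $b \in A_{n-1}$, together with (A1), to pass from $\alpha_n(f-a, X)$ to $\alpha_{K(n-1)+1}(f, X)$, then re-index via $K(n-1)+1 \asymp n$.) Feeding the two inequalities just proved into \cite{Delo}, Theorem~9.1, with $r = \rho$, completes the proof.

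\emph{Main difficulty.} With the Jackson inequality taken in the $\|f\|_Y$-form the argument is nearly mechanical, and the only substantive point is to recognize the claim as an instance of the preceding proposition with the exponents matched correctly ($r = \rho$, so $\theta = \alpha/\rho \in (0,1)$). The genuinely delicate ingredient is the index map $K$: since the scheme only provides $A_n + A_n \subseteq A_{K(n)}$, each use of ``additivity'' — in the sharper Jackson form above, or in any argument routed through an approximation scheme on $Y$ — costs a composition with $K$, so one must know that $K$ grows at most linearly (true in all standard schemes, e.g. $K(n) = 2n$), or pass to the subsequence $\big(A_{K^{(j)}(n_0)}\big)_j$, in order to absorb the resulting shifts. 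Quasinorm (rather than norm) constants must also be tracked, but they affect only the implicit constants and not the identity of the spaces.
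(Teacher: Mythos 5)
The paper offers no proof of this proposition (it is quoted from DeVore--Lorentz as Theorem 9.3), so there is nothing to compare against; your argument is correct and is essentially the standard proof from that reference: both inequalities follow from truncating the $\ell_\mu$-sum defining $\|\cdot\|_{X_\mu^\rho}$ at $m=n$ (using $\alpha_m(p,X)=0$ for $m\geq n+1$ when $p\in A_n$, resp.\ the monotonicity of $\alpha_m(f,X)$) together with $\sum_{m=1}^{n} m^{\rho\mu-1}\asymp n^{\rho\mu}$, after which the ``moreover'' clause is exactly Proposition 4.6 applied with $r=\rho$. Your caveat about the $\alpha_n(f;Y)$ versus $\|f\|_Y$ form of Jackson's inequality is well taken, but the discrepancy lies in the paper's statement of (4.1) rather than in your proof, since Proposition 4.6 only requires the $\|f\|_Y$ form.
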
 

We will use the preceding theorem to establish relations among approximation spaces of \textit{H-operators}.

\section{Main Results} 

Now, we define an approximation space for compact $H$-operator by using $|\lambda_{n}(T)| $. 

\begin{definition}
Let $ 0 < \rho < \infty$ and $ 0 < \mu \leq \infty$. Set $X: =\text {the set of all compact H-operators }$ between two arbitrary Banach spaces.Consider an approximation scheme $(X, A_{n})$.  We define an approximation space for $H$-compact operators by  $$A_{\mu}^{\rho}: = \{T \in X: (n^{\rho - \mu^{-1} } |\lambda_{n}(T)| ) \in \ell_{\mu} \}\quad n=1,2,\dots$$   We put $||T||_{A_{\mu}^{\rho}} = ||n^{\rho - \mu^{-1} } |\lambda_{n}(T)| )||_{\ell_{\mu}}$ for $T\in A_{\mu}^{\rho}$. 

\end{definition} 

To realize the importance of constructing approximation spaces for compact H-operators, let's recall the following question: What are the conditions on  $T\in \Lin (X, Y)$ such that $(\lambda_{n}(T)) \in \ell_{q}$?  

Answer: If $T$ is a compact H-operator, then $A_{q}^{\frac{1}{q}}$ consists of all elements $T\in \Lin (X, Y)$ such that $(|\lambda_{n}(T))| \in \ell_{q}$, which implies $(\lambda_{n}(T)) \in \ell_{q}$.

\begin{lemma}
If $0 < \theta < 1$ and $1 \leq \mu_{1}  \leq \mu_{2}  \leq \infty$, one has $(X_{0}, X_{1})_{\theta, \mu_{1}} \subset (X_{0}, X_{1})_{\theta, \mu_{2}} $ (with continuous embedding). 
\end{lemma}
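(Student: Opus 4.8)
The plan is to prove this standard nesting property of real interpolation spaces directly from the definition via the $K$-functional. The key observation is that for fixed $f \in X_0 + X_1$, the function $t \mapsto t^{-\theta} K(f,t)$ lies in the weighted space $L_{\mu_1}\bigl([0,\infty), \frac{dt}{t}\bigr)$ and we want to conclude it lies in $L_{\mu_2}\bigl([0,\infty), \frac{dt}{t}\bigr)$ with a bound on the norm; ordinarily $L_{\mu_1} \not\subset L_{\mu_2}$ on an infinite measure space, so the embedding must exploit the monotonicity of $K(f,t)$. Specifically, $K(f,t)$ is nondecreasing in $t$ while $t^{-\theta}$ is decreasing, and the interplay — together with $\theta \in (0,1)$ forcing $t^{-\theta}K(f,t) \to 0$ at both ends (using $K(f,t) \le \max(1,t)\,\|f\|_{X_0+X_1}$ near $0$ and $K(f,t)/t$ bounded as $t \to \infty$) — is exactly what makes the inclusion work.

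First I would reduce to the case $\mu_2 = \infty$: it suffices to show $(X_0,X_1)_{\theta,\mu_1} \subset (X_0,X_1)_{\theta,\infty}$ with $\|f\|_{\theta,\infty} \le C\|f\|_{\theta,\mu_1}$, and then separately $(X_0,X_1)_{\theta,\mu_1} \subset (X_0,X_1)_{\theta,\mu_2}$ for $\mu_1 \le \mu_2 < \infty$. For the first reduction, I would fix $t_0 > 0$ and use that $s^{-\theta}K(f,s)$ is comparable to $t_0^{-\theta}K(f,t_0)$ for $s$ in a fixed multiplicative neighborhood of $t_0$: indeed for $s \in [t_0, 2t_0]$ one has $K(f,s) \ge K(f,t_0)$ (monotonicity) and $s^{-\theta} \ge (2t_0)^{-\theta}$, so $\int_{t_0}^{2t_0} \bigl(s^{-\theta}K(f,s)\bigr)^{\mu_1}\frac{ds}{s} \ge c\,\bigl(t_0^{-\theta}K(f,t_0)\bigr)^{\mu_1}$ with $c = c(\theta,\mu_1)$. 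Since the left side is at most $\|f\|_{\theta,\mu_1}^{\mu_1}$, taking the supremum over $t_0$ gives $\|f\|_{\theta,\infty} \le C\|f\|_{\theta,\mu_1}$.

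Next, for $\mu_1 \le \mu_2 < \infty$, I would write $\bigl(s^{-\theta}K(f,s)\bigr)^{\mu_2} = \bigl(s^{-\theta}K(f,s)\bigr)^{\mu_2 - \mu_1}\bigl(s^{-\theta}K(f,s)\bigr)^{\mu_1}$ and bound the first factor by $\|f\|_{\theta,\infty}^{\mu_2-\mu_1}$, so that
\[
\int_0^\infty \bigl(s^{-\theta}K(f,s)\bigr)^{\mu_2}\frac{ds}{s} \le \|f\|_{\theta,\infty}^{\mu_2 - \mu_1} \int_0^\infty \bigl(s^{-\theta}K(f,s)\bigr)^{\mu_1}\frac{ds}{s} = \|f\|_{\theta,\infty}^{\mu_2 - \mu_1}\,\|f\|_{\theta,\mu_1}^{\mu_1}.
\]
Combining with the first step ($\|f\|_{\theta,\infty} \le C\|f\|_{\theta,\mu_1}$) yields $\|f\|_{\theta,\mu_2} \le C^{1 - \mu_1/\mu_2}\|f\|_{\theta,\mu_1}$, which gives the continuous embedding. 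The case $\mu_1 = \mu_2$ is trivial, and $\mu_2 = \infty$ with $\mu_1 < \infty$ is the first step itself.

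The main obstacle is the first step — the embedding into the $\mu = \infty$ endpoint — because it is the only place where one genuinely needs the monotonicity of $K(f,\cdot)$ rather than a soft inclusion of $L_p$ spaces; once that pointwise-to-$L_\infty$ control is in hand, the passage from $\mu_1$ to general $\mu_2$ is the routine Hölder-type interpolation displayed above. I would be careful that the constant $c(\theta,\mu_1)$ extracted from the local integral is uniform in $t_0$, which it is because the substitution $s = t_0 u$ removes the $t_0$-dependence from $\int_{t_0}^{2t_0} s^{-\theta\mu_1}\frac{ds}{s}$ up to the overall factor $t_0^{-\theta\mu_1}$ that matches the target.
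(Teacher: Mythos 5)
Your proof is correct and follows essentially the same route as the paper's: both establish the endpoint bound $t_0^{-\theta}K(f,t_0)\leq C\,\|f\|_{(X_0,X_1)_{\theta,\mu_1}}$ by integrating against the monotonicity of $K(f,\cdot)$ (the paper over $[t_0,\infty)$, you over $[t_0,2t_0]$, both giving a constant uniform in $t_0$), and then pass from $\mu_1$ to $\mu_2$ via the factorization $\|g\|_{L_{\mu_2}}^{\mu_2}\leq\|g\|_{L_\infty}^{\mu_2-\mu_1}\|g\|_{L_{\mu_1}}^{\mu_1}$. Your write-up actually makes explicit the H\"older-type step the paper only names, and correctly uses that $K(f,\cdot)$ is nondecreasing, so no changes are needed.
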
 
\begin{proof}

Note that if $1 \leq \mu < \infty$, and $t_{0} > 0$, then by the monotonicity of the K-functional, $K(f, t)$, we have $K(f, t) \geq K(f, t_{0}) \text{ for } t > t_{0}$, so that 

$$\left(||f||_{(X_{0}, X_{1})_{\theta, \mu} }\right)^{\mu}  =  \int_{0}^{\infty} \left(t^{-\theta} K(f, t) \right)^{\mu} \frac{dt}{t}\  \geq \left [K(f, t_{0}) \right]^{\mu} \int_{t_{0}}^{\infty} t^{-\theta \mu} \frac{dt}{t}  = \left [K(f, t_{0}) \right]^{\mu} \frac{t_{0}^{-\theta \mu}} {\theta \mu} $$
  which implies 
$$ t_{0}^{-\theta} K(f, t_{0}) \leq C ||f||_{(X_{0}, X_{1})_{\theta, \mu} } $$ and thus we have $ ||t^{-\theta} K(f, t)||_{L_{\infty}((0, \infty), \frac{dt}{t})} \leq C ||f||_{(X_{0}, X_{1})_{\theta, \mu} }$
and now using the  H\"older's inequality, one obtains: $$||f||_{(X_{0}, X_{1})_{\theta, \mu_{2}} } = ||t^{-\theta} K(f, t)||_{L_{\mu_{2}}((0, \infty), \frac{dt}{t})} \leq C' ||f||_{(X_{0}, X_{1})_{\theta, \mu_{1}} } \text{ for } 1 \leq \mu_{1}  \leq \mu_{2}  \leq \infty.$$
\end{proof}

We will also use the following lemma, which gives the basic relation between Kolmogorov numbers and approximation numbers. 
\begin{lemma} [\cite{CS}] 
$\delta_{n}(T) \leq \alpha_{n}(T)$ for all $T \in \mathcal{L}(X,Y)$. 
\end{lemma}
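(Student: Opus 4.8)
The plan is to obtain the inequality straight from the definitions of the two $s$-numbers, using the elementary observation that a finite-rank operator is annihilated by the quotient map onto a subspace containing its range. Fix $T \in \mathcal{L}(X,Y)$ and $n \in \mathbb{N}$; I may assume $\alpha_{n}(T) < \infty$, since otherwise there is nothing to prove. Let $\varepsilon > 0$. By the definition of the $n$th approximation number there is an operator $A \in \mathcal{F}(X,Y)$ with $\operatorname{rank}(A) \le n-1$ and $\|T - A\| \le \alpha_{n}(T) + \varepsilon$.

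Next I would set $G := \overline{\operatorname{range}(A)} \subseteq Y$, a subspace with $\dim G \le n-1 \le n$, which is therefore admissible in the infimum defining $\delta_{n}(T)$. Since $Ax \in G$ for every $x \in X$, we have $Q_{G} A = 0$, and hence
\[ \delta_{n}(T) \le \|Q_{G} T\| = \|Q_{G}(T - A)\| \le \|Q_{G}\|\,\|T - A\| \le \|T - A\| \le \alpha_{n}(T) + \varepsilon, \]
where the penultimate inequality uses that the canonical quotient map $Q_{G}\colon Y \to Y/G$ has norm at most $1$. Letting $\varepsilon \downarrow 0$ gives $\delta_{n}(T) \le \alpha_{n}(T)$, which is the assertion.

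Since the argument only manipulates norms and ranks, there is no serious obstacle; the only genuine care needed is bookkeeping with the indexing conventions fixed earlier in the paper (here $A$ has rank strictly less than $n$, so $G$ lies among the subspaces allowed for $\delta_{n}$, and in fact the sharper bound $\delta_{n-1}(T) \le \alpha_{n}(T)$ used in Markus's theorem falls out the same way). No compactness, separability, or reflexivity hypotheses on $X$ or $Y$ enter, so the inequality holds for every bounded operator, exactly as stated.
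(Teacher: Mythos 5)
Your argument is correct, and it is the standard proof of this fact; the paper itself offers no proof, simply citing Carl and Stephani, so there is nothing to diverge from. Your bookkeeping is also right: since the approximating operator has rank at most $n-1$, its range is an admissible subspace for $\delta_{n-1}$ as well as $\delta_{n}$, so you in fact recover the sharper bound $\delta_{n-1}(T)\leq \alpha_{n}(T)$ that the paper later invokes in the proof of the Inclusion Theorem.
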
 



\begin{theorem} [Inclusion Theorem]

Let $ 0 < \rho < \infty$ and $ 0 < \mu_{1} \leq \mu_{2} \leq \infty$. If $T$ is a compact H-operator between arbitrary Banach spaces $X$ and $Y$, then $A_{\mu_{1}}^{\rho} \subset A_{\mu_{2}}^{\rho }$. 

\end{theorem}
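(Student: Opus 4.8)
The plan is to reduce the Inclusion Theorem for $H$-operators to the corresponding inclusion for abstract approximation spaces, and then to prove that abstract inclusion directly from the definition of $\alpha_n(f,X)$ together with the monotonicity of the approximation numbers. The key observation is that, by definition, $T \in A_{\mu}^{\rho}$ precisely when the sequence $(|\lambda_n(T)|)$ lies in the ``weighted'' space determined by the exponents $\rho$ and $\mu$; since by the Corollary to Markus's theorem the sequences $(|\lambda_n(T)|)$, $(\alpha_n(T))$ and $(\delta_{n}(T))$ are all equivalent, membership in $A_{\mu}^{\rho}$ is equivalent to membership of the approximation-number sequence in the same weighted space. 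So it suffices to show: if $(a_n)$ is a non-increasing non-negative sequence and $(n^{\rho - 1/\mu_1} a_n) \in \ell_{\mu_1}$, then $(n^{\rho - 1/\mu_2} a_n) \in \ell_{\mu_2}$ whenever $\mu_1 \le \mu_2$.

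First I would record the elementary fact that for a non-increasing non-negative sequence $(b_n)$ one has $n^{1/\mu_2} b_n \le C\,\bigl(\sum_{k=1}^{n} b_k^{\mu_1}\bigr)^{1/\mu_1}$ for any $\mu_1 \le \mu_2$ — this is the sequence analogue of the $L_{\mu_1} \hookrightarrow L_{\mu_2,\infty}$ embedding used in the proof of Lemma~5.3 above, obtained by noting $n\,b_n^{\mu_1} \le \sum_{k=1}^n b_k^{\mu_1}$. Applying this with $b_n = n^{\rho - 1/\mu_1} a_n$ (which is not itself monotone, so one has to be a little careful — the cleanest route is to first treat the case $\mu_1 < \infty$ via this maximal-type inequality and a summation-by-parts/Hardy-inequality argument, exactly mirroring the $K$-functional computation in Lemma~5.3 but with the integral $\int \frac{dt}{t}$ replaced by the sum $\sum \frac{1}{n}$). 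Concretely, the chain is: $\sup_n n^{\rho} a_n \le C\,\|(n^{\rho-1/\mu_1}a_n)\|_{\ell_{\mu_1}}$, and then Hölder's inequality interpolates between this $\ell_\infty$-type bound and the given $\ell_{\mu_1}$ bound to yield the $\ell_{\mu_2}$ bound, with the weight exponents matching up automatically because $\rho - 1/\mu_2 = (\rho-1/\mu_1)\cdot(\mu_1/\mu_2) + \rho\cdot(1 - \mu_1/\mu_2)$.

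Alternatively — and this may be the more economical way to write it given what the paper has already set up — I would invoke Proposition~4.? (the $\textbf{[\cite{Delo}, Theorem 9.3]}$ result): taking $Y := A_{\mu_1}^{\rho}$, which satisfies the Jackson and Bernstein inequalities, one gets $A_{\mu_i}^{\rho} = (X, Y)_{\theta,\mu_i}$ for a common $\theta$ (namely $\theta = \rho/r$ with $r$ chosen as in that proposition), so the desired inclusion $A_{\mu_1}^{\rho} \subset A_{\mu_2}^{\rho}$ is literally Lemma~5.3 applied to this particular Banach couple. This is the slick proof; it does require checking the hypotheses $0 < \rho < r$ and $1 \le \mu_1 \le \mu_2 \le \infty$ line up with what Lemma~5.3 and the Delo\'{e} propositions demand (in particular Lemma~5.3 needs $\mu_1 \ge 1$, so one should either assume $1 \le \mu_1$ or supply the direct sequence-space argument of the previous paragraph to cover $0 < \mu_1 < 1$).

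The main obstacle I anticipate is purely bookkeeping rather than conceptual: making sure the equivalence from Markus's Corollary is applied correctly at the level of the whole sequence (so that $\|(|\lambda_n(T)|)\|$ in the weighted norm is comparable to $\|(\alpha_n(T))\|$, including the constant, using that both sequences are non-increasing), and — if one goes the interpolation route — confirming that the approximation scheme $(X, A_n)$ on the space of compact $H$-operators genuinely has $\bigcup_n A_n$ dense, i.e. that $A_{\mu_1}^{\rho}$ really is an admissible choice of $Y$ in $\textbf{[\cite{Delo}, Theorem 9.3]}$. If instead one gives the hands-on sequence argument, the only delicate point is the non-monotonicity of $n^{\rho-1/\mu_1}a_n$, which is handled by passing through the $\sup_n n^{\rho}a_n$ estimate as above.
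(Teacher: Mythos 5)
Your proposal is correct, and your second (``slick'') route is essentially the paper's own proof: the paper shows via Markus's inequality (2.5) and $\delta_{n-1}(T)\leq\alpha_n(T)$ that $A_{\mu}^{\rho}$ coincides with the approximation space $X_{\mu}^{\rho}$ built from $\alpha_n(T)$, identifies this with the real interpolation space $(X,Y)_{\rho/r,\mu}$ via the DeVore--Lorentz proposition, and then applies the embedding lemma $(X_0,X_1)_{\theta,\mu_1}\subset(X_0,X_1)_{\theta,\mu_2}$ (the paper's Lemma~5.2, which you call Lemma~5.3). Your first route --- the direct sequence argument that a non-increasing $(a_n)$ with $(n^{\rho-1/\mu_1}a_n)\in\ell_{\mu_1}$ satisfies $\sup_n n^{\rho}a_n<\infty$ and hence, by factoring $n^{\rho\mu_2-1}a_n^{\mu_2}=\bigl(n^{\rho\mu_1-1}a_n^{\mu_1}\bigr)\bigl(n^{\rho}a_n\bigr)^{\mu_2-\mu_1}$, lies in the $\mu_2$-space --- is genuinely different and in one respect stronger: since $(|\lambda_n(T)|)$ is itself non-increasing, it applies directly to the defining sequence of $A_{\mu}^{\rho}$ without invoking Markus's theorem, the interpolation machinery, or the identification $A_{\mu}^{\rho}=X_{\mu}^{\rho}$ at all, and it covers the full range $0<\mu_1\leq\mu_2\leq\infty$ claimed in the theorem. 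The caveat you raise is real: the paper's Lemma~5.2 is stated and proved only for $1\leq\mu_1\leq\mu_2\leq\infty$ (its proof uses H\"older and the normalized $K$-functional estimate), so the paper's argument as written does not literally cover $0<\mu_1<1$; your elementary argument closes that gap. The only point to tighten in your write-up is the derivation of $\sup_n n^{\rho}a_n\lesssim\|(n^{\rho-1/\mu_1}a_n)\|_{\ell_{\mu_1}}$: as you note, $n^{\rho-1/\mu_1}a_n$ need not be monotone, so one should sum $k^{\rho\mu_1-1}a_k^{\mu_1}\geq c\,n^{\rho\mu_1-1}a_n^{\mu_1}$ over $k\in[n/2,n]$ rather than over $k\leq n$; with that adjustment the argument is complete.
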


\begin{proof}

From (2.5) we have $\alpha_{n}(T) \leq 2 \sqrt{2} C \left |\lambda_{n}(T) \right| $, which implies $$n^{\rho - \mu^{-1} } \alpha_{n}(T) \leq 2\sqrt{2} C n^{\rho - \mu^{-1}}  \left |\lambda_{n} (T)  \right | . $$

Thus, if $n^{\rho - \mu^{-1}}  \left |\lambda_{n} (T)  \right |  \in \ell_{\mu}$, then $2\sqrt{2} C n^{\rho - \mu^{-1}}  \left |\lambda_{n} (T)  \right | \in \ell_{\mu}$, which implies that $n^{\rho - \mu^{-1} } \alpha_{n}(T)  \in \ell_{\mu}$. 
It follows that if $T \in A_{\mu}^{\rho}$, then $T \in X_{\mu}^{\rho}$. Therefore, $A_{\mu}^{\rho} \subset X_{\mu}^{\rho}$.

By Lemma 5.3, we have $\delta_{n-1} (T) \leq \alpha_{n +1} (T)$, (2.5) implies that $$2 \sqrt{2} C |\lambda_{n}(T)| \leq 8C(C + 1) \delta_{n-1}(T) \leq 8C(C + 1) \alpha_{n}(T).$$ Hence, $$\frac{2 \sqrt{2} C}{ 8C(C + 1)}  n^{\rho - \mu^{-1}}  |\lambda_{n}(T)| \leq n^{\rho - \mu^{-1}}  \alpha_{n}(T).$$Thus, if $n^{\rho - \mu^{-1}}  \alpha_{n}(T) \in \ell_{\mu}$, then $\frac{2 \sqrt{2} C}{ 8C(C + 1)}  n^{\rho - \mu^{-1}}  |\lambda_{n}(T)| \in \ell_{\mu}$. It follows that $T \in X_{\mu}^{\rho} \implies T \in A_{\mu}^{\rho}$. 
Hence, $X_{\mu}^{\rho} \subset A_{\mu}^{\rho}$.  

We have $A_{\mu}^{\rho} = X_{\mu}^{\rho}$. By Proposition 4.6, we also know  $A_{\mu}^{\rho} = X_{\mu}^{\rho} = (X, Y)_{\frac{\rho}{r}, \mu}$. 

By Lemma 5.2, we have $A_{\mu_{1} }^{\rho} \subset A_{\mu_{2}}^{\rho }$ as  was promised.

\end{proof}

The proof of the following theorem relies on Markus' inequality (2.5), H\"{o}lder's inequality and proof of an analogous representation theorem in \cite{Pi}. 

\begin{theorem} [Representation Theorem]

Let $ 0 < \rho < \infty$ and $ 0 < \mu \leq \infty$. Set $X: =\text {the set of all compact H-operators }$ between two arbitrary Banach spaces and $A_{\mu}^{\rho}: = \{T \in X: (n^{\rho - \mu^{-1} } |\lambda_{n}(T)| ) \in \ell_{\mu} \}$. 
Consider an approximation scheme $(X, A_{n})$. Then $T \in X$ belongs to $A_{\mu}^{\rho}$ if and only if there exists $g_{n} \in A_{2^{n}}$ such that $T = \sum_{n =0}^{\infty} g_{n}$ and $(2^{n\rho} ||g_{n}||_{X} ) \in \ell_{\mu}$. 
Moreover, $||T||_{A_{\mu}^{\rho}} ^{\text{rep}} : = \inf||(2^{n\rho}||g_{n}||)||_{\ell_{\mu}}$, where the infimum is taken over all possible representations, defines an equivalent quasi-norm on $A_{\mu}^{\rho}$.

\end{theorem}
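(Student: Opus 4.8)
The plan is to reduce the statement to the already-proved identity $A_\mu^\rho = X_\mu^\rho$ from the Inclusion Theorem, and then to prove the representation theorem for the \emph{general} approximation space $X_\mu^\rho$ attached to the scheme $(X,A_n)$, following the classical argument of Pietsch \cite{Pi}. Since the Inclusion Theorem shows that $T\in X$ lies in $A_\mu^\rho$ precisely when $(n^{\rho-\mu^{-1}}\alpha_n(T,X))\in\ell_\mu$, it suffices to characterize membership in $X_\mu^\rho$ via dyadic representations; the passage from the sequence of all indices $n$ to the dyadic blocks $2^n$ is the standard condensation trick, which works because $n^{\rho-\mu^{-1}}\alpha_n$ is ``almost decreasing'' in a suitable sense once one accounts for the weight.

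First I would prove the ``only if'' direction. Given $T\in A_\mu^\rho=X_\mu^\rho$, for each $n\ge 0$ choose a near-best approximant $a_n\in A_{2^n-1}$ with $\|T-a_n\|_X\le 2\alpha_{2^n}(T,X)$ (legitimate by the definition of $\alpha_n(T,X)$ and density (A3); for $n=0$ take $a_0$ suitably, or $a_{-1}=0$). Set $g_0:=a_0$ and $g_n:=a_n-a_{n-1}$ for $n\ge1$, so that the telescoping partial sums $\sum_{k=0}^N g_k=a_N\to T$ in $X$ (convergence because $\alpha_{2^N}(T,X)\to0$, which holds as $T\in X_\mu^\rho$ forces $\alpha_n(T,X)\to0$). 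By (A1)–(A2), $g_n\in A_{2^{n-1}}+A_{2^{n-1}}\subseteq A_{K(2^{n-1})}$, and after adjusting the scheme constant one may arrange $g_n\in A_{2^{n}}$ (or replace $2^n$ by $c\cdot 2^n$ throughout; the standard treatment absorbs $K$ into the choice of base). Then
\[
\|g_n\|_X\le c_X\big(\|T-a_n\|_X+\|T-a_{n-1}\|_X\big)\le 4c_X\,\alpha_{2^{n-1}}(T,X),
\]
and since $\alpha_m(T,X)$ is non-increasing, a comparison of the sum $\sum_n (2^{n\rho}\|g_n\|_X)^\mu$ with $\sum_m (m^{\rho-\mu^{-1}}\alpha_m(T,X))^\mu$ (summing over $m$ in the dyadic block $[2^{n-1},2^n)$, which contributes $\asymp 2^{n(\rho\mu-1)}$ copies up to constants) gives $(2^{n\rho}\|g_n\|_X)\in\ell_\mu$ with norm controlled by $C\|T\|_{X_\mu^\rho}$. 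The case $\mu=\infty$ is handled by the obvious supremum version of the same estimate.

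For the ``if'' direction, suppose $T=\sum_{n\ge0}g_n$ with $g_n\in A_{2^n}$ and $(2^{n\rho}\|g_n\|_X)\in\ell_\mu$. Fix $m\ge1$ and pick $N$ with $2^N\le m<2^{N+1}$. Then $\sum_{k=0}^{N}g_k\in A_{2^0}+\cdots+A_{2^N}\subseteq A_{c2^N}\subseteq A_{c m}$ for a scheme constant $c$ coming from iterating (A1), so
\[
\alpha_{cm}(T,X)\le \Big\|T-\sum_{k=0}^N g_k\Big\|_X\le \sum_{k>N} c_X^{\,k-N}\|g_k\|_X,
\]
and estimating the tail (geometric decay of the weights $2^{k\rho}$ against the $\ell_\mu$ sequence, via Hölder when $1\le\mu$, or directly otherwise) yields $\alpha_{cm}(T,X)\le C\,2^{-N\rho}\,\epsilon_N$ with $(\epsilon_N)$ an $\ell_\mu$-sequence; feeding this into the weighted $\ell_\mu$-sum over $m$ and undoing the dyadic decomposition shows $T\in X_\mu^\rho=A_\mu^\rho$ with $\|T\|_{A_\mu^\rho}\le C\|T\|_{A_\mu^\rho}^{\mathrm{rep}}$. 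Combining the two directions gives the equivalence of quasi-norms; that $\|\cdot\|_{A_\mu^\rho}^{\mathrm{rep}}$ is itself a quasi-norm (the triangle-type inequality with constant) follows by concatenating representations of $S$ and $T$.

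The main obstacle I anticipate is bookkeeping with the scheme map $K$ in axiom (A1): iterating $A_n+A_n\subseteq A_{K(n)}$ to absorb finitely many dyadic levels into a single $A_{cm}$ requires either assuming $K(n)\le \kappa n$ (a mild, usually tacit, hypothesis already implicit in \cite{Pi}) or carrying an extra slowly-growing factor, which would perturb the exponent $\rho$ unless controlled. I would state explicitly that we use $A_n+A_n\subseteq A_{2n}$ (WLOG after rescaling the scheme), exactly as in the classical proof, so that $A_{2^0}+\cdots+A_{2^N}\subseteq A_{2^{N+1}}$ and all constants are genuine constants independent of $m$; with that normalization the dyadic comparisons above are routine and the two inequalities between $\|\cdot\|_{A_\mu^\rho}$ and $\|\cdot\|_{A_\mu^\rho}^{\mathrm{rep}}$ follow in the standard way.
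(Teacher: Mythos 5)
Your overall strategy is the same as the paper's: both run the classical Pietsch dyadic representation argument, and your preliminary reduction to $A_\mu^\rho = X_\mu^\rho$ (already established inside the Inclusion Theorem) is legitimate --- the paper achieves the same effect by invoking Markus' inequality (2.5) inline to convert between $|\lambda_{2^N}(T)|$ and $\alpha_{2^N}(T)$ at each step rather than citing the equality once. Your forward direction (near-best approximants $a_n\in A_{2^n-1}$, telescoping, dyadic condensation) matches the paper's construction $g_{n+2}=g_{n+1}^{\star}-g_{n}^{\star}$, and your explicit handling of the scheme map $K$ in (A1) addresses a point the paper glosses over (it silently needs $K(m)\le 2m$, or an equivalent normalization, to conclude $g_{n+2}\in A_{2^{n+2}}$).

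The genuine gap is in your ``if'' direction: you bound the tail by iterating the raw quasi-triangle inequality, obtaining $\|T-\sum_{k\le N}g_k\|_X\le\sum_{k>N}c_X^{\,k-N}\|g_k\|_X$, and then assert that the geometric decay of the weights wins. It does not in general: with $\|g_k\|_X\lesssim 2^{-k\rho}\epsilon_k$ and $(\epsilon_k)\in\ell_\mu$, the tail is $2^{-N\rho}\sum_{j\ge1}(c_X2^{-\rho})^{j}\epsilon_{N+j}$, which can diverge whenever $c_X\ge 2^{\rho}$ (e.g.\ $\epsilon_k=(k+1)^{-2/\mu}$), so the estimate $\alpha_{cm}(T,X)\le C2^{-N\rho}\epsilon_N$ is not justified for small $\rho$ or large $c_X$. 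This is precisely why the paper first replaces $\|\cdot\|_X$ by an equivalent $p$-norm with $0<p<\mu$ (Aoki--Rolewicz), giving $\|T-\sum_{n<N}g_n\|_X^{p}\le\sum_{n\ge N}\|g_n\|_X^{p}$ with no growing constants, and only then applies H\"older with $q=\mu/p$ and sums the resulting convolution with the geometrically decaying kernel $2^{(N-n)\rho p}$. Your argument needs this renorming step (or the extra hypothesis $c_X<2^{\rho}$) to close; everything else is routine and parallels the paper.
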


\begin{proof}

Suppose $T \in A_{\mu}^{\rho}$. We wish to find $g_{n} \in A_{2^{n}}$ such that $T = \sum_{n =0}^{\infty} g_{n}$ and $(2^{n\rho} ||g_{n}||_{X} ) \in \ell_{\mu}$. Choose $g_{n}^{\star}  \in A_{2^{n} -1} $ such that $$||T - g_{n}^{\star} ||_{X} \leq 2 \alpha_{2^{n}}(T) \leq 4\sqrt{2} C|\lambda_{2^{n}}(T)|.$$
Set $g_{0} = 0 = g_{1}$, and $g_{n +2} = g_{n+1}^{\star}  - g_{n}^{\star} $ for $n = 0, 1, \cdots.$. We have $g_{n} \in A_{2^{n}}$, and $$T = \lim_{n \to \infty} g_{n}^{\star} =  \sum_{n =0}^{\infty} g_{n}.$$

Moreover, it follows from $$||g_{n+2}||_{X} \leq c_{X} [||T - g_{n+1}^{\star} ||_{X} + ||T-g_{n}^{\star}||_{X} ] \leq 4c_{X} \alpha_{2^{n}}(T) \leq 16\sqrt{2} c_{X}C|\lambda_{2^{n}}(T)|$$ that $(2^{n\rho} ||g_{n}||_{X} ) \in \ell_{\mu}$. 

Next, suppose there exists $g_{n} \in A_{2^{n}}$ such that $T = \sum_{n =0}^{\infty} g_{n}$ and $(2^{n\rho} ||g_{n}||_{X} ) \in \ell_{\mu}$. We must show that $T \in A_{\mu}^{\rho}$. Although $\alpha_{n}(T, X)$ is in general not a continuous function of $T$, we can also find an equivalent quasi-norm $X$, $p$-norm, that is always continuous. Thus, we can assume that $||.||_{X}$ is a p-norm with $0 < p < \mu$. If $T \in X$ can be written in the form $T = \sum_{n =0}^{\infty} g_{n}$ such that $g_{n} \in A_{2^{n}}$ and $(2^{n\rho} ||g_{n}||_{X} ) \in \ell_{\mu}$, then it follows from $\sum_{n =0}^{N-1} g_{n} \in A_{2^{N}-1} $ that 
\begin{flalign*}
|\lambda_{2^{N}}(T)| & \leq 2\sqrt{2} (C + 1) \alpha_{2^{N}}(T) \leq 2\sqrt{2} (C + 1)|| T - \sum_{n =0}^{N-1} g_{n}||_{X}^{p}&\\  
& \leq 2\sqrt{2} (C + 1)\sum_{n =N}^{\infty} ||g_{n}||_{X}^{p}.&
  \end{flalign*}

In the case $0 < \mu < \infty$ we put $q = \frac{\mu}{p}$, and choose $\gamma$ such that $\rho p > \gamma > 0$. 
Then 

\begin{flalign*}
\sum_{N =0}^{\infty} [2^{N\rho} \lambda_{2^{N}}(T) ]^{\mu}& \leq 2\sqrt{2} (C + 1)\sum_{N =0}^{\infty} [2^{N\rho} \alpha_{2^{N}}(T) ]^{\mu} &\\ \leq  & 2\sqrt{2} (C + 1)\sum_{N =0}^{\infty}   2^{N\rho \mu} \left( \sum_{n =N}^{\infty} 2^{-n\gamma}2^{n\gamma} ||g_{n}||_{X} ^{p} \right) ^{q} & \\
& \leq 2\sqrt{2} (C + 1)\sum_{N =0}^{\infty}   2^{N\rho \mu} \left( \sum_{n =N}^{\infty} 2^{-n\gamma q'} \right)^{\frac{q}{q'} } \left( \sum_{n = N}^{\infty} 2^{n\gamma q} ||g_{n}||_{X} ^{p} \right) &\\
& \leq c_{1} 2\sqrt{2} (C + 1) \sum_{N =0} ^{\infty} 2^{N(\rho \mu - \gamma q)} \sum_{n =N} ^{\infty} 2^{n \gamma q }||g_{n} ||_{X}^{\mu} &\\
& \leq c_{1} 2\sqrt{2} (C + 1) \sum_{n=0}^{\infty} 2^{n\gamma q} ||g_{n}||_{X}^{\mu} \sum_{N=1}^{n} 2^{N(\rho \mu - \gamma q)} &\\
& \leq c_{2} 2\sqrt{2} (C + 1)  \sum_{n =0}^{\infty} [2^{n \rho} ||g_{n}||_{X} ]^{\mu} < \infty&
 \end{flalign*} 
The desired result follows.

\end{proof}

\section{Connection to Bernstein's Lethargy Theorem} 

The question of the rate of convergence of $\lambda_{n}(T)$ provides some connection to the classical Bernstein's Lethargy problem. 
 
 Now, we consider the \textit{Bernstein lethargy problem} for linear approximation: given a nested system $A_{1} \subset A_{2} \subset \cdots $ of linear subspaces of a Banach space $X$ and a strictly decreasing sequence $d_{0} > d_{1} > \cdots > d_{n} \to 0$, does there exist an element $x \in X$  such that for all $n = 0, 1, 2, \cdots$,  $\alpha_{n} (x) = \alpha_{n}(x, A_{n}) = d_{n}$?
 
 The answer is \textit{yes} in many particular cases: if $X$ is a Hilbert space; if all $A_{n}$ are finite-dimensional; if $d_{n} > \sum_{k = n + 1}^{\infty} d_{k} $ for all $n$. However, the Bernstein problem is still unsolved in its general setting.
 
 In the remaining part of this section, we investigate for infinite-dimensional Banach spaces $X$ and $Y$ the existence of an operator $T \in \mathcal{L}(X, Y)$ whose sequence of approximation numbers $\{ \alpha_{n} (T) \}$ behaves like the prescribed sequence $\{d_{n} \}$ given above in the Bernstein lethargy problem. If $\mathcal{A}_{n}$ denotes the space of all bounded linear operators from $X$ into $Y$  with rank at most $n$, then $\alpha_{n}(T) = \rho (T, \mathcal{A}_{n})$.

 \begin{definition}
 
 The operator $T \in \mathcal{K}(X, Y)$ where $X$ and $Y$ are complex Banach spaces is said to be a \textit{kernel operator} if it can be represented in the form  \begin{equation} T = \sum_{j=1}^{\infty} \alpha_{j} f_{j}(\cdot) y_{j} \end{equation} ($f_{j} \in X^{*}, y_{j} \in X, ||f_{j}|| = ||y_{j} || = 1, j = 1, 2, \cdots $),
 where $\sum_{j =1}^{\infty} |\alpha_{j} |< \infty$. 
 \end{definition}
 
 \begin{proposition} \cite{Mar} 
 For any sequence of non-negative numbers $(d_{n})$ that tends to zero, a kernel operator $T$ exists such that $\delta_{n}(T) \geq d_{n} $ ($n =0, 1, 2, \cdots$). 
 \end{proposition}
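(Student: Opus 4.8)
The plan is to construct $T$ explicitly as a nuclear (kernel) series whose successive tails control the Kolmogorov numbers, the point being that the ``missing mass'' of the series can be recovered as a lower bound for $\delta_n(T)$.

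First I would reduce to the case where $(d_n)$ is strictly decreasing: replacing $d_n$ by $\widetilde d_n:=\sup_{k\ge n}d_k$ only enlarges each term, still produces a null sequence, and after a harmless further perturbation one may assume $d_0>d_1>\cdots\to 0$. Then set $\gamma_k:=d_k-d_{k+1}>0$. The crucial elementary identities are
\[
\sum_{k=0}^{\infty}\gamma_k=d_0<\infty,\qquad \sum_{k=n}^{\infty}\gamma_k=d_n\quad(n\ge 0),
\]
the first of which will force the kernel-operator condition $\sum_j|\alpha_j|<\infty$ for free, while the second is precisely the quantity to be bounded below.

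Next I would write $T=\sum_{k=0}^{\infty}\gamma_k\,f_k(\cdot)\,y_k$ with $\|f_k\|=\|y_k\|=1$, choosing the rank-one system so that for every $n$ the tail $\sum_{k\ge n}\gamma_k f_k(\cdot)y_k$ acts coherently on a common finite-dimensional piece: concretely, I would arrange an $(n+1)$-dimensional subspace $F_n\subseteq X$ that is uniformly complemented (projection $P_n$, with $\lambda:=\sup_n\|P_n\|<\infty$), is invariant under $T$, and on which $S_n:=T|_{F_n}$ is invertible with $\|S_n^{-1}\|_{F_n\to F_n}\le \lambda^{-1}/d_n$ (up to an absolute constant). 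Granting this, the lower bound is soft. For a subspace $G$ with $\dim G\le n$ and $x\in F_n$ one has $Tx\in F_n$, hence
\[
\operatorname{dist}_X(Tx,G)\ \ge\ \|P_n\|^{-1}\operatorname{dist}(Tx,P_nG)\ \ge\ \lambda^{-1}\|S_n^{-1}\|^{-1}\operatorname{dist}\bigl(x,S_n^{-1}(P_nG)\bigr),
\]
and since $S_n^{-1}(P_nG)$ is a subspace of $F_n$ of dimension $\le n<\dim F_n$, the elementary fact that $\sup_{\|u\|\le 1}\operatorname{dist}(u,H)\ge 1$ for every proper subspace $H$ of a normed space (take a norm-one functional vanishing on $H$) yields $\|Q_GT\|\ge\lambda^{-1}\|S_n^{-1}\|^{-1}\ge d_n$. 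Taking the infimum over all such $G$ gives $\delta_n(T)\ge d_n$; any fixed constant lost above is absorbed by running the construction on $(c\,d_n)$, which is again a null sequence.

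The main obstacle is the construction promised in the previous paragraph: one must exhibit a single nuclear operator that is simultaneously ``$d_n$-expanding'' on an $(n+1)$-dimensional subspace for \emph{every} $n$, while the total mass $\sum_k\gamma_k$ remains finite. This is genuinely delicate, since a diagonal operator realizing all these expansions would have diagonal comparable to $(d_n)\notin\ell_1$; therefore the subspaces $F_n$ must overlap and each small coefficient $\gamma_k$ must contribute to the expansion on many different $F_n$ at once --- this is exactly the structural meaning of $d_n=\sum_{k\ge n}\gamma_k$. Carrying it out requires selecting the vectors $y_k$ (and biorthogonal-type functionals $f_k$) along a suitably chosen basic sequence in $X$ with uniformly bounded basis and complementation constants, arranging the overlaps of the $F_n$ so the tails telescope correctly, and then estimating $\|S_n^{-1}\|$ block by block; this is where essentially all of the work of Markus's argument lies.
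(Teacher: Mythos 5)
The paper itself gives no proof of this proposition --- it is quoted from Markus \cite{Mar} --- so your attempt can only be judged on its own terms, and on those terms it has a genuine gap. The telescoping reduction ($\gamma_k:=d_k-d_{k+1}$, $\sum_k\gamma_k=d_0<\infty$, $\sum_{k\ge n}\gamma_k=d_n$) and the ``soft'' lower bound are fine: if one \emph{has} an $(n+1)$-dimensional, uniformly complemented, $T$-invariant subspace $F_n$ on which $\|Tx\|\ge \lambda d_n\|x\|$, then your chain of inequalities (project onto $F_n$, pull back through $S_n^{-1}$, and use that the unit ball of $F_n$ contains points at distance $1$ from any proper subspace) correctly yields $\delta_n(T)\ge d_n$; this is just the standard fact that Bernstein-type numbers minorize Kolmogorov numbers. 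But the entire mathematical content of the proposition is the existence of a single nuclear series $T=\sum_k\gamma_k f_k(\cdot)y_k$ admitting such a family $(F_n,P_n,S_n)$ \emph{simultaneously for all $n$} with $\sup_n\|P_n\|<\infty$, and you do not construct it --- you state it as a hypothesis (``Granting this\dots'') and then acknowledge in your final paragraph that ``essentially all of the work'' lies there. A proof that defers its only nontrivial step is a proof sketch, not a proof.

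The gap is not merely bureaucratic: the requirements you place on the $F_n$ are strong and in tension with nuclearity. Invertibility of $T|_{F_n}$ with $\|S_n^{-1}\|^{-1}\gtrsim d_n$ on an $(n+1)$-dimensional invariant subspace, for every $n$, together with uniform complementation, is exactly the kind of statement that fails for the naive diagonal model (whose diagonal would have to dominate $(d_n)\notin\ell_1$), as you yourself observe. Resolving that tension requires an explicit choice of the biorthogonal-type system $(f_k,y_k)$ --- for instance a concrete ``triangular'' or summation-type operator on a suitable basic sequence --- together with a block-by-block estimate of $\|S_n^{-1}\|$ and of the complementation constants; none of this is supplied, and it is not obvious that the overlapping-subspace scheme you describe can be made to work in an arbitrary infinite-dimensional Banach space. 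Until that construction is exhibited and its estimates verified, the argument does not establish the proposition.
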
 
 
 By Lemma 5.3, we always have $\alpha_{n}(T) \geq \delta_{n}(T)$ for every $T \in \mathcal{L}(X, Y)$. Then by the preceding proposition, we have for a strictly decreasing sequence $d_{0} > d_{1} > \cdots > d_{n} \to 0$, there is always an element in $\mathcal{L}(X, Y)$, namely a kernel operator $T$ such that one has $\alpha_{n}(T) \geq \delta_{n}(T) \geq d_{n}$.


 \bibliographystyle{amsplain}

\end{document}